      \theoremstyle{plain}
      \newtheorem{theorem}{Theorem}
      \newtheorem{corollary}[theorem]{Corollary}
      \newtheorem{proposition}[theorem]{Proposition}
      \newtheorem{question}[theorem]{Question}
      \theoremstyle{definition}
      \newtheorem{definition}[theorem]{Definition}
      \theoremstyle{remark}
      \theoremstyle{plain}
      \newtheorem*{theorem*}{Theorem}
      \newtheorem*{lemma*}{Lemma}
      \newtheorem*{corollary*}{Corollary}
      \newtheorem*{proposition*}{Proposition}
      \newtheorem*{conjecture*}{Conjecture}
      \newtheorem*{question*}{Question}
      \newtheorem*{claim*}{Claim}
      \theoremstyle{definition}
      \newtheorem*{definition*}{Definition}
      \newtheorem*{example*}{Example}
      \newtheorem*{game*}{Game}
      \theoremstyle{remark}
      \newtheorem*{remark*}{Remark}
\begin{document}

\title{Dual selection games}

\author{Steven Clontz}
\address{Department of Mathematics and Statistics,
The University of South Alabama,
Mobile, AL 36688}
\email{sclontz@southalabama.edu}

\keywords{Selection principle, selection game,
limited information strategies}

\subjclass[2010]{54C30, 54D20, 54D45, 91A44}

\begin{abstract}
  Often, a given selection game studied in the literature has
  a known dual game. In dual games, a winning
  strategy for a player in either game may be used to create
  a winning strategy for the opponent in the dual. 
  For example, the Rothberger selection game involving open covers
  is dual to the point-open game. This extends to a general
  theorem: if \(\{\ran{f}:f\in\mathbf C(\mc R)\}\) is coinitial in \(\mc A\)
  with respect to \(\subseteq\),
  where \(\mathbf C(\mc R)=\{f\in(\bigcup\mc R)^{\mc R}:R\in\mc R\Rightarrow f(R)\in R\}\) 
  collects the choice functions on the set \(\mc R\),
  then \(G_1(\mc A,\mc B)\) and \(G_1(\mc R,\neg\mc B)\)
  are dual selection games. 
\end{abstract}

\maketitle

\section{Introduction}

\begin{definition}
  An \term{\(\omega\)-length game} is a pair \(G=\<M,W\>\) such that
  \(W\subseteq M^\omega\). The set \(M\) is the \term{moveset} of the game,
  and the set \(W\) is the \term{payoff set} for the second player.
\end{definition}

In such a game \(G\), players \(\plI\) and \(\plII\) alternate making choices
\(a_n\in M\) and \(b_n\in M\) during each round \(n<\omega\), 
and \(\plII\) wins the game if and only if \(\<a_0,b_0,a_1,b_1,\dots\>\in W\).

Often when defining games, \(\plI\) and \(\plII\) are restricted to choosing
from different movesets \(A,B\). Of course, this can be modeled with \(\<M,W\>\)
by simply letting \(M=A\cup B\) and adding/removing sequences from \(W\)
whenever player \(\plI\)/\(\plII\) makes the first ``illegal'' move.

A class of such games heavily studied in the literature (see \cite{MR1378387} and its
many sequels) are selection games.

\begin{definition}
  The \term{selection game} \(\schStrongSelGame{\mc A}{\mc B}\) 
  is an \(\omega\)-length game involving Players \(\plI\) and \(\plII\). 
  During round \(n\), \(\plI\) chooses
  \(A_n\in\mc A\), followed by \(\plII\) choosing \(B_n\in A_n\).
  Player \(\plII\) wins in the case that \(\{B_n:n<\omega\}\in\mc B\),
  and Player \(\plI\) wins otherwise.
\end{definition}

  For brevity, let 
  \[
    \schStrongSelGame{\mc A}{\neg \mc B}
      =
    \schStrongSelGame{\mc A}{\mc P\left(\bigcup \mc A\right)\setminus \mc B}
  .\]
  That is, \(\plII\) wins in the case that \(\{B_n:n<\omega\}\not\in\mc B\),
  and \(\plI\) wins otherwise.

\begin{definition}
  For a set \(X\), let \(\mathbf C(X)=\{f\in(\bigcup X)^X:x\in X\Rightarrow f(x)\in x\}\)
  be the collection of all choice functions on \(X\).
\end{definition}

\begin{definition}
  Write \(X\preceq Y\) if \(X\) is coinitial in \(Y\) with respect to \(\subseteq\);
  that is, \(X\subseteq Y\), and for all \(y\in Y\), there exists \(x\in X\) such that 
  \(x\subseteq y\).

  In the context of selection games, we will say \(\mc A'\) is a \term{selection basis}
  for \(\mc A\) when \(\mc A'\preceq \mc A\).
\end{definition}

\begin{definition}
  The set \(\mc R\) is said to be a \term{reflection} of the set \(\mc A\)
  if \[\{\ran f:f\in\mathbf C(\mc R)\}\] is a selection basis for \(\mc A\).
\end{definition}

Put another way, \(\mc R\) is a reflection of \(\mc A\) if for every \(A\in\mc A\),
there exists \(f\in\mathbf C(\mc R)\) such that \(\ran f\in\mc A\) and \(\ran f\subseteq A\).

As we will see, reflections of selection sets are used frequently (but implicitly) 
throughout the literature to define dual selection games.

We use the following conventions to describe strategies for playing games.

\begin{definition}
  For \(f\in B^A\) and \(X\subseteq A\), let \(f\rest X\) be the restrction of \(f\)
  to \(X\). In particular, for \(f\in B^\omega\) and \(n<\omega\), \(f\rest n\)
  describes the first \(n\) terms of the sequence \(f\).
\end{definition}

\begin{definition}
  A \term{strategy} for the first player \(\plI\) (resp. second player \(\plII\))
  in a game \(G\) with moveset \(M\) is a function
  \(\sigma:M^{<\omega}\to M\). This strategy is said to be \term{winning} if
  for all possible \term{attacks} \(\alpha\in M^\omega\) by their opponent,
  where \(\alpha(n)\) is played by the opponent during round \(n\),
  the player wins the game by playing \(\sigma(\alpha\rest n)\)
  (resp. \(\sigma(\alpha\rest n+1)\)) during round \(n\).
\end{definition}

That is, a strategy is a rule that determines the moves of a player based upon
all previous moves of the opponent. (It could also rely on all previous
moves of the player using the strategy, since these can be reconstructed from
the previous moves of the opponent and the strategy itself.)

\begin{definition}
  A \term{predetermined strategy} for the first player \(\plI\)
  in a game \(G\) with moveset \(M\) is a function
  \(\sigma:\omega\to M\). This strategy is said to be winning if
  for all possible attacks \(\alpha\in M^\omega\) by their opponent,
  the first player wins the game by playing \(\sigma(n)\)
  during round \(n\).
\end{definition}

So a predetermined strategy ignores all moves of the opponent during the
game (all moves were decided before the game began).

\begin{definition}
  A \term{Markov strategy} for the second player \(\plII\)
  in a game \(G\) with moveset \(M\) is a function
  \(\sigma:M\times\omega\to M\). This strategy is said to be winning if
  for all possible attacks \(\alpha\in M^\omega\) by their opponent,
  the first player wins the game by playing \(\sigma(\alpha(n),n)\)
  during round \(n\).
\end{definition}

So a Markov strategy may only consider the most recent move of the opponent,
and the current round number. Note that unlike perfect-information or
predetermined strategies, a Markov strategy cannot use knowledge of
moves used previously by the player (since they depend on previous moves of
the opponent that have been ``forgotten'').

\begin{definition}
  Write \(\plI\win G\) (resp. \(\plI\prewin G\)) if player \(\plI\) has a winning
  strategy (resp. winning predetermined strategy) for the game \(G\). Similarly,
  write \(\plII\win G\) (resp. \(\plII\markwin G\)) if player \(\plII\) has a winning
  strategy (resp. winning Markov strategy) for the game \(G\).
\end{definition}

Of course, \(\plII\markwin G\Rightarrow \plII\win G\Rightarrow \plI\notwin G\Rightarrow \plI\notprewin G\).
In general, none of these implications (not even the second \cite{MR0054922}) can be reversed.

It's worth noting that \(\plI\notprewin \schStrongSelGame{\mc A}{\mc B}\) is equivalent
to the selection principle often denoted \(\schStrongSelProp{\mc A}{\mc B}\) in the literature.

The goal of this paper is to characerize when two games are ``dual'' in the following
senses.

\begin{definition}
  A pair of games \(G(X),H(X)\) defined for a topological space \(X\)
  are \term{Markov information dual} if both
  of the following hold.
  \begin{itemize}
    \item \(I\prewin G(X)\) if and only if \(II\markwin H(X)\).
    \item \(II\markwin G(X)\) if and only if \(I\prewin H(X)\).
  \end{itemize}
\end{definition}

\begin{definition}
  A pair of games \(G(X),H(X)\) defeind for a topological space \(X\)
  are \term{perfect information dual} if both
  of the following hold.
  \begin{itemize}
    \item \(I\win G(X)\) if and only if \(II\win H(X)\).
    \item \(II\win G(X)\) if and only if \(I\win H(X)\).
  \end{itemize}
\end{definition}

\section{Main Results}

The following four theorems demonstrate that reflections characterize
dual selection games for both perfect information strategies and
certain limited information strategies.

The duality of the Rothberger game \(\schStrongSelGame{\mc O_X}{\mc O_X}\)
and the point-open game on \(X\) for perfect information strategies
was first noted by Galvin in \cite{MR0493925}, and
for Markov-information strategies by Clontz and Holshouser in
\cite{2018arXiv180606001C}. These proofs may be generalized as follows.

\begin{theorem}
  Let \(\mc R\) be a reflection of \(\mc A\). 

  Then
  \(\plI\prewin\schStrongSelGame{\mc A}{\mc B}\) if and only if
  \(\plII\markwin\schStrongSelGame{\mc R}{\neg\mc B}\).
\end{theorem}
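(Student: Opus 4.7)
The plan is to prove both directions by directly translating strategies, exploiting the observation that a Markov strategy for \(\plII\) in \(\schStrongSelGame{\mc R}{\neg\mc B}\) amounts to a round-indexed family of choice functions \(f_n\in\mathbf C(\mc R)\). The reflection property then matches up these sequences with plays in \(\schStrongSelGame{\mc A}{\mc B}\).

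For the forward direction, I would start with a winning predetermined strategy \(\sigma:\omega\to\mc A\) for \(\plI\) in \(\schStrongSelGame{\mc A}{\mc B}\). Using the coinitial half of the reflection property, for each \(n<\omega\) pick \(f_n\in\mathbf C(\mc R)\) with \(\ran f_n\subseteq\sigma(n)\), and define \(\tau(R,n)=f_n(R)\). Since \(f_n(R)\in R\), this is a legal Markov strategy for \(\plII\) in the dual game. For any attack \(\langle R_n:n<\omega\rangle\), the induced response satisfies \(f_n(R_n)\in\ran f_n\subseteq\sigma(n)\), so \(\langle f_n(R_n):n<\omega\rangle\) is a legal \(\plII\)-response to \(\sigma\)'s attack in \(\schStrongSelGame{\mc A}{\mc B}\); by hypothesis \(\{f_n(R_n):n<\omega\}\notin\mc B\), which is exactly the winning condition for \(\plII\) in \(\schStrongSelGame{\mc R}{\neg\mc B}\).

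For the backward direction, I would start with a winning Markov strategy \(\tau\) for \(\plII\) in \(\schStrongSelGame{\mc R}{\neg\mc B}\). For each \(n\), the map \(f_n:\mc R\to\bigcup\mc R\) defined by \(f_n(R)=\tau(R,n)\) is a choice function on \(\mc R\), so by the inclusion \(\{\ran f:f\in\mathbf C(\mc R)\}\subseteq\mc A\) we may set \(\sigma(n)=\ran f_n\in\mc A\). Given any response \(b_n\in\sigma(n)\), pick \(R_n\in\mc R\) with \(b_n=\tau(R_n,n)\). Then \(\langle b_n:n<\omega\rangle\) is precisely the Markov response of \(\tau\) to the attack \(\langle R_n:n<\omega\rangle\), so \(\{b_n:n<\omega\}\notin\mc B\), which shows \(\sigma\) is winning for \(\plI\).

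The proof is essentially a rewriting, so the main point of care is not computational but definitional: one must track which half of the relation \(\{\ran f:f\in\mathbf C(\mc R)\}\preceq\mc A\) is used in each direction. The coinitiality (``for every \(A\in\mc A\) some \(\ran f\subseteq A\)'') drives the forward direction by letting us shrink \(\sigma(n)\) into a manageable \(\ran f_n\); the containment (``\(\{\ran f:f\in\mathbf C(\mc R)\}\subseteq\mc A\)'') drives the backward direction by ensuring that \(\ran f_n\) is itself a legal move for \(\plI\). This asymmetry is the template one expects to recur in the analogous perfect-information theorem that follows.
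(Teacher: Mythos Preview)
Your proposal is correct and follows essentially the same argument as the paper's proof: in each direction you build the same choice functions \(f_n\) (from coinitiality in the forward direction, from the Markov strategy in the backward direction) and verify the winning condition just as the paper does. Your closing remark about which half of the relation \(\{\ran f:f\in\mathbf C(\mc R)\}\preceq\mc A\) is used in each direction is a helpful gloss that the paper leaves implicit.
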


\begin{proof}
  Let \(\sigma\) witness 
  \(\plI\prewin\schStrongSelGame{\mc A}{\mc B}\).
  Since \(\sigma(n)\in\mc A\),
  \(\ran{f_n}\subseteq\sigma(n)\)
  for some \(f_n\in\mathbf C(\mc R)\). So let
  \(\tau(R,n)=f_n(R)\) for all \(R\in \mc R\) and \(n<\omega\).
  Suppose \(R_n\in \mc R\) for all \(n<\omega\).
  Note that since \(\sigma\) is winning and 
  \(\tau(R_n,n)=f_n(R_n)\in\ran{f_n}\subseteq\sigma(n)\),
  \(\{\tau(R_n,n):n<\omega\}\not\in\mc B\). Thus \(\tau\) witnesses
  \(\plII\markwin\schStrongSelGame{\mc R}{\neg\mc B}\).

  Now let \(\sigma\) witness
  \(\plII\markwin\schStrongSelGame{\mc R}{\neg\mc B}\).
  Let \(f_n\in\mathbf C(\mc R)\) be defined by \(f_n(R)=\sigma(R,n)\),
  and let \(\tau(n)=\ran{f_n}\in\mc A\). 
  Suppose that \(B_n\in\tau(n)=\ran{f_n}\) for
  all \(n<\omega\). Choose \(R_n\in\mc R\) such that 
  \(B_n=f_n(R_n)=\sigma(R_n,n)\). Since \(\sigma\) is winning,
  \(\{B_n:n<\omega\}\not\in\mc B\). Thus \(\tau\) witnesses
  \(\plI\prewin\schStrongSelGame{\mc A}{\mc B}\).
\end{proof}

\begin{theorem}
  Let \(\mc R\) be a reflection of \(\mc A\). 

  Then
  \(\plII\markwin\schStrongSelGame{\mc A}{\mc B}\) if and only if
  \(\plI\prewin\schStrongSelGame{\mc R}{\neg\mc B}\).
\end{theorem}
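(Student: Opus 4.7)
The plan is to prove both implications separately. For the $(\Leftarrow)$ direction, I would adapt the forward half of the previous theorem. Starting from a winning predetermined $\sigma$ for $\plI$ in $\schStrongSelGame{\mc R}{\neg\mc B}$, I would use the reflection to pick, for each $A\in\mc A$, a choice function $f_A\in\mathbf C(\mc R)$ with $\ran{f_A}\subseteq A$, and define the candidate Markov strategy by $\tau(A,n)=f_A(\sigma(n))$. The key observation is that $\tau(A,n)$ lies in both $\sigma(n)$ (since $f_A\in\mathbf C(\mc R)$ and $\sigma(n)\in\mc R$) and $A$ (since $\ran{f_A}\subseteq A$). So any attack $(A_n)$ produces selections $\tau(A_n,n)\in\sigma(n)$, and $\sigma$'s winning property forces $\{\tau(A_n,n):n<\omega\}\in\mc B$, confirming $\tau$ witnesses $\plII\markwin\schStrongSelGame{\mc A}{\mc B}$.

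The $(\Rightarrow)$ direction is subtler. Given a winning Markov $\sigma$ for $\plII$ in $\schStrongSelGame{\mc A}{\mc B}$, the goal is to commit in advance to a sequence $\tau(n)=R_n\in\mc R$ such that every attack $B_n\in R_n$ by $\plII$ yields $\{B_n:n<\omega\}\in\mc B$. The natural sufficient condition is
\[ R_n \subseteq \{\sigma(A,n):A\in\mc A\}, \]
because then each $B_n$ can be written as $\sigma(A_n,n)$ for some $A_n\in\mc A$, and applying $\sigma$'s winning property against the attack $(A_n)$ produces the desired $\{B_n:n<\omega\}\in\mc B$.

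The hard part will be showing that such an $R_n$ exists for every $n$, and I intend to argue by contradiction with a diagonal use of the reflection property. If for some $n$ no such $R$ works, then for each $R\in\mc R$ I could pick $B_R\in R$ outside $\{\sigma(A,n):A\in\mc A\}$; the assignment $f(R)=B_R$ yields $f\in\mathbf C(\mc R)$, and the reflection property forces $\ran f\in\mc A$. Then $\sigma(\ran f,n)\in\ran f$ must equal $B_{R^*}$ for some $R^*\in\mc R$, placing $B_{R^*}$ in $\{\sigma(A,n):A\in\mc A\}$ and contradicting its defining property. Collecting the obstacles $B_R$ into a single element of $\mc A$ via the reflection and then confronting it with $\sigma$'s own output is the crux of the proof; once the $R_n$ are in hand, the predetermined strategy $\tau(n)=R_n$ works and the remaining verification is routine.
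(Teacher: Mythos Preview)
Your proposal is correct and follows essentially the same approach as the paper's proof: the $(\Leftarrow)$ direction via $\tau(A,n)=f_A(\sigma(n))$ matches verbatim, and your $(\Rightarrow)$ argument---finding $R_n\subseteq\{\sigma(A,n):A\in\mc A\}$ by contradiction using a choice function $f\in\mathbf C(\mc R)$ whose range lies in $\mc A$---is exactly the paper's diagonal trick. The only cosmetic difference is that the paper phrases the contradiction as ``$\sigma(\ran g,n)\neq g(R)$ for all $R$'' rather than explicitly naming the $R^*$ with $\sigma(\ran f,n)=B_{R^*}$, but the logic is identical.
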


\begin{proof}
  Let \(\sigma\) witness 
  \(\plII\markwin\schStrongSelGame{\mc A}{\mc B}\).
  Let \(n<\omega\). Suppose that for each \(R\in\mc R\),
  there was \(g(R)\in R\) such that for all \(A\in \mc A\),
  \(\sigma(A,n)\not=g(R)\). Then \(g\in\mathbf C(\mc R)\)
  and \(\ran g\in\mc A\),
  thus \(\sigma(\ran g,n)\not=g(R)\) for all \(R\in\mc R\),
  a contradiction.

  So choose \(\tau(n)\in\mc R\) such that for all \(r\in \tau(n)\)
  there exists \(A_{r,n}\in\mc A\) such that \(\sigma(A_{r,n},n)=r\).
  It follows that when \(r_n\in\tau(n)\) for \(n<\omega\),
  \(\{r_n:n<\omega\}=\{\sigma(A_{r_n,n}):n<\omega\}\in B\),
  so \(\tau\) witnesses
  \(\plI\prewin\schStrongSelGame{\mc R}{\neg\mc B}\).

  Now let \(\sigma\) witness 
  \(\plI\prewin\schStrongSelGame{\mc R}{\neg\mc B}\).
  Then \(\sigma(n)\in\mc R\), so for \(A\in\mc A\), let
  \(f_A\in\mathbf C(\mc R)\) satisfy \(\ran{f_A}\subseteq A\),
  and let \(\tau(A,n)=f_A(\sigma(n))\in A\cap\sigma(n)\).
  Then if \(A_n\in\mc A\) for \(n<\omega\), \(\tau(A_n,n)\in\sigma(n)\),
  so \(\{\tau(A_n,n):n<\omega\}\in\mc B\).
  Thus \(\tau\) witnesses
  \(\plII\markwin\schStrongSelGame{\mc A}{\mc B}\).
\end{proof}

\begin{theorem}
  Let \(\mc R\) be a reflection of \(\mc A\). 

  Then
  \(\plI\win\schStrongSelGame{\mc A}{\mc B}\) if and only if
  \(\plII\win\schStrongSelGame{\mc R}{\neg\mc B}\).
\end{theorem}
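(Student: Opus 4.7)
The plan is to adapt the arguments of Theorems~1 and~2 to full-information strategies by \emph{threading} a simulated play in each game through its dual: the choice functions $f_n \in \mathbf C(\mc R)$ that were indexed only by the round number before now also depend on the full history of play, and so must be produced inductively.

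For the forward direction, let $\sigma$ witness $\plI\win\schStrongSelGame{\mc A}{\mc B}$. I define $\tau$ for $\plII$ in $\schStrongSelGame{\mc R}{\neg\mc B}$ round by round so that $\plII$'s responses $r_n := \tau(R_0,\dots,R_n)$ form a legal $\plII$-attack against $\sigma$ in a simulated $\mc A$-game. Having defined $r_0,\dots,r_{n-1}$ from the history, set $A_n := \sigma(r_0,\dots,r_{n-1}) \in \mc A$; by the reflection property, pick $f_n \in \mathbf C(\mc R)$ with $\ran{f_n} \subseteq A_n$ and let $r_n := f_n(R_n)$, which lies in $R_n \cap A_n$. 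Since $\sigma$ is winning and the resulting $\mc A$-play is legal, $\{r_n : n<\omega\}\notin\mc B$, as required.

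For the backward direction, let $\sigma$ witness $\plII\win\schStrongSelGame{\mc R}{\neg\mc B}$. I build $\tau$ for $\plI$ in $\schStrongSelGame{\mc A}{\mc B}$ while maintaining an auxiliary $\mc R$-game history aligned with the actual $\mc A$-game. Given $\plII$'s $\mc A$-history $B_0,\dots,B_{n-1}$, inductively one has $R_0,\dots,R_{n-1} \in \mc R$ with $B_i = \sigma(R_0,\dots,R_i)$ for each $i<n$. Define $f_n \in \mathbf C(\mc R)$ by $f_n(R) := \sigma(R_0,\dots,R_{n-1},R)$ and set $\tau(B_0,\dots,B_{n-1}) := \ran{f_n} \in \mc A$. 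When $\plII$ plays $B_n \in \ran{f_n}$, extend the auxiliary history by taking $R_n$ with $f_n(R_n) = B_n$. The resulting full play $\<B_n:n<\omega\>$ is then precisely $\plII$'s response via $\sigma$ to $\<R_n:n<\omega\>$, hence lies outside $\mc B$, so $\tau$ wins.

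The main subtlety is that $\tau$ in the backward direction must be a genuine strategy, i.e., $\tau(B_0,\dots,B_{n-1})$ must be determined by the listed moves alone. This is handled by fixing a well-ordering of $\mc R$ once and for all and choosing the least $R_n$ with $f_n(R_n) = B_n$ at each step; then the whole auxiliary history is a function of $B_0,\dots,B_{n-1}$, so $\tau$ is well defined. Beyond this bookkeeping, each direction is a direct inductive extension of the corresponding argument from Theorems~1 and~2.
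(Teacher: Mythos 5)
Your proposal is correct and follows essentially the same route as the paper: in each direction you simulate a play of the dual game by threading an auxiliary history through the given strategy, using the reflection property's choice functions at each node, and you handle the same well-definedness issue the paper handles (the paper indexes its auxiliary $\mc R$-moves $R_s$ by the visible move-sequences $s$ via an arbitrary-but-fixed choice, which is the same bookkeeping as your "least element under a fixed well-ordering"). No gaps.
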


\begin{proof}
  Let \(\sigma\) witness 
  \(\plI\win\schStrongSelGame{\mc A}{\mc B}\).
  Let \(c(\emptyset)=\emptyset\). Suppose 
  \(c(s)\in(\bigcup A)^{<\omega}\)
  is defined for \(s\in\mc R^{<\omega}\). Since \(\sigma(c(s))\in\mc A\),
  let \(f_s\in\mathbf C(\mc R)\) satisfy \(\ran{f_s}\subseteq\sigma(c(s))\),
  and let \(c(s\concat\<R\>)=c(s)\concat\<f_s(R)\>\).
  Then let \(c(\alpha)=\bigcup\{c(\alpha\rest n):n<\omega\}\)
  for \(\alpha\in\mc R^\omega\), so
  \[
    c(\alpha)(n)
      =
    f_{\alpha\rest n}(\alpha(n))
      \in
    \ran{f_{\alpha\rest n}}
      \subseteq
    \sigma(c(\alpha\rest n))
  \]
  demonstrating that \(c(\alpha)\) is a legal attack against \(\sigma\).

  Let \(\tau(s\concat\<R\>)=f_s(R)\). Consider the attack \(\alpha\in\mc R^\omega\)
  against \(\tau\). Then since \(\sigma\) is winning and
  \(
    \tau(\alpha\rest n+1)=f_{\alpha\rest n}(\alpha(n))\in
    \ran{f_{\alpha\rest n}}\subseteq\sigma(c(\alpha\rest n))
  \), it follows that \(\{\tau(\alpha\rest n+1):n<\omega\}\not\in\mc B\).
  Thus \(\tau\) witnesses
  \(\plII\win\schStrongSelGame{\mc R}{\neg\mc B}\).

  Now let \(\sigma\) witness
  \(\plII\win\schStrongSelGame{\mc R}{\neg\mc B}\).
  For \(s\in \mc R^{<\omega}\), define \(f_s\in\mathbf C(\mc R)\)
  by \(f_s(R)=\sigma(s\concat\<R\>)\). Let \(\tau(\emptyset)=\ran{f_\emptyset}\in\mc A\),
  and for \(x\in\tau(\emptyset)\), choose \(R_{\<x\>}\in\mc R\) such that
  \(x=f_{\emptyset}(R_{\<x\>})\) (for other \(x\in\bigcup A\), choose \(R_{\<x\>}\)
  arbitrarily as it won't be used). Now let \(s\in(\bigcup A)^{<\omega}\),
  and suppose \(R_{s\rest n\concat\<x\>}\in\mc R\) has been defined for
  \(n\leq|s|\) and \(x\in\bigcup A\). 
  Then let \(\tau(s\concat\<x\>)=\ran{f_{\<R_{s\rest 0},\dots,R_s,R_{s\concat\<x\>}\>}}\)
  and for \(y\in\tau(s)\) choose \(R_{s\concat\<x,y\>}\) such that
  \(x=f_{\<R_{s\rest 0},\dots,R_s,R_{s\concat\<x\>}\>}(R_{s\concat\<x,y\>})\) (and again,
  choose \(R_{s\concat\<x,y\>}\) arbitrarily for other \(y\in\bigcup\mc A\) as it won't be used).

  Then let \(\alpha\) attack \(\tau\), so
  \(\alpha(n)\in\tau(\alpha\rest n)\) and thus 
  \(\alpha(n)=f_{\<R_{\alpha\rest 0},\dots,R_{\alpha\rest n}\>}(R_{\alpha\rest n+1})
  =\sigma(\<R_{\alpha\rest 0},\dots,R_{\alpha\rest n+1}\>)\).  Since \(\sigma\) is winning,
  \(\{\sigma(\<R_{\alpha\rest 0},\dots,R_{\alpha\rest n+1}\>):n<\omega\}
  =\{\alpha(n):n<\omega\}\not\in\mc B\).
  Thus \(\tau\) witnesses
  \(\plI\win\schStrongSelGame{\mc A}{\mc B}\).
\end{proof}

\begin{theorem}
  Let \(\mc R\) be a reflection of \(\mc A\). 

  Then
  \(\plII\win\schStrongSelGame{\mc A}{\mc B}\) if and only if
  \(\plI\win\schStrongSelGame{\mc R}{\neg\mc B}\).
\end{theorem}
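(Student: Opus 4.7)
The plan is to lift Theorem 2 from the Markov level to full perfect-information strategies, just as Theorem 3 lifted Theorem 1. The round-indexed diagonal argument from Theorem 2 will be applied at every node of the play tree, aided by an auxiliary function recording a simulated play-history in the companion game.

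For the \(\Rightarrow\) direction, suppose \(\sigma\) witnesses \(\plII\win\schStrongSelGame{\mc A}{\mc B}\). I will recursively construct \(\tau(s)\in\mc R\) for \(s\in\mc R^{<\omega}\) together with a companion \(T(s)\in\mc A^{<\omega}\) of length \(|s|\), with \(T(\emptyset)=\emptyset\). At each stage, applying the diagonal argument from Theorem 2 to the map \(A\mapsto\sigma(T(s)\concat\<A\>)\) yields some \(R\in\mc R\) such that every \(r\in R\) equals \(\sigma(T(s)\concat\<A_{s,r}\>)\) for some \(A_{s,r}\in\mc A\): otherwise a choice function \(g\) picking \(g(R)\in R\) missed by all such values would satisfy \(\ran g\in\mc A\) (by reflection) and \(\sigma(T(s)\concat\<\ran g\>)\in\ran g\), contradicting the construction of \(g\). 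Set \(\tau(s)=R\) and \(T(s\concat\<r\>)=T(s)\concat\<A_{s,r}\>\). Any attack \(\alpha\) against \(\tau\) then extends to the attack \(\bigcup_n T(\alpha\rest n)\in\mc A^\omega\) on \(\sigma\) whose \(\plII\)-responses are exactly \(\{\alpha(n):n<\omega\}\); since \(\sigma\) wins, this set lies in \(\mc B\), so \(\tau\) witnesses \(\plI\win\schStrongSelGame{\mc R}{\neg\mc B}\).

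For the \(\Leftarrow\) direction, suppose \(\sigma\) witnesses \(\plI\win\schStrongSelGame{\mc R}{\neg\mc B}\), and for each \(A\in\mc A\) fix \(f_A\in\mathbf C(\mc R)\) with \(\ran f_A\subseteq A\). Given a history \(\<A_0,\dots,A_n\>\) of \(\plI\)'s moves, define recursively \(r_i=\tau(\<A_0,\dots,A_i\>)=f_{A_i}(\sigma(\<r_0,\dots,r_{i-1}\>))\); since \(r_i\in\sigma(\<r_0,\dots,r_{i-1}\>)\cap A_i\), any attack \(\<A_0,A_1,\dots\>\) against \(\tau\) induces a legal attack \(\<r_0,r_1,\dots\>\) on \(\sigma\), forcing \(\{r_n:n<\omega\}\in\mc B\).

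The main obstacle is the bookkeeping in the \(\Rightarrow\) direction: maintaining the companion history \(T(s)\) along every branch of the tree and tagging each \(r\in\tau(s)\) with a specific \(A_{s,r}\in\mc A\), so that the reconstructed attack on \(\sigma\) in the original game reproduces the \(\plII\)-moves \(\alpha(n)\) played against \(\tau\) in the dual game exactly.
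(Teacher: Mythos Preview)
Your argument is correct and essentially identical to the paper's: your companion history \(T(s)\) is the paper's \(a(s)\), the diagonal argument at each node is the same, and the backward direction matches verbatim. One notational slip worth fixing: the domain of \(\tau\) (and of \(T\)) should be \((\bigcup\mc R)^{<\omega}\), not \(\mc R^{<\omega}\), since Player \(\plII\) in \(\schStrongSelGame{\mc R}{\neg\mc B}\) responds with elements of \(\bigcup\mc R\); the rest of your construction (e.g., writing \(s\concat\<r\>\) with \(r\in\tau(s)\)) already implicitly treats \(s\) this way.
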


\begin{proof}
  Let \(\sigma\) witness 
  \(\plII\win\schStrongSelGame{\mc A}{\mc B}\).
  Let \(s\in(\bigcup A)^{<\omega}\) and assume \(a(s)\in\mc A^{|s|}\) is defined
  (of course, \(a(\emptyset)=\emptyset\)).
  Suppose for all \(R\in\mc R\) there existed \(f(R)\in R\) such that for all
  \(A\in\mc A\), \(\sigma(a(s)\concat\<A\>)\not=f(R)\). Then
  \(f\in\mathbf C(\mc R)\) and \(\ran{f}\in\mc A\), and thus 
  \(\sigma(a(s)\concat\<\ran{f}\>)\not=f(R)\) for all \(R\in\mc R\), a contradiction.
  So let \(\tau(s)\in\mc R\) satisfy for all \(x\in\tau(s)\) there exists
  \(a(s\concat\<x\>)\in\mc A^{|s|+1}\) extending \(a(s)\) such that 
  \(x=\sigma(a(s\concat\<x\>))\).

  If \(\tau\) is attacked by \(\alpha\in(\bigcup R)^\omega\), then 
  \(\alpha(n)\in\tau(\alpha\rest n)\). So \(\alpha(n)=\sigma(a(\alpha\rest n+1))\),
  and since \(\sigma\) is winning, 
  \(\{\sigma(a(\alpha\rest n+1)):n<\omega\}=\{\alpha(n):n<\omega\}\in\mc B\).
  Therefore \(\tau\) witnesses
  \(\plI\win\schStrongSelGame{\mc R}{\neg\mc B}\).

  Now let \(\sigma\) witness
  \(\plI\win\schStrongSelGame{\mc R}{\neg\mc B}\).
  Let \(s\in\mc A^{<\omega}\), and suppose \(r(s)\in(\bigcup \mc R)^{|s|}\) is defined
  (again, \(r(\emptyset)=\emptyset\)). For \(A\in\mc A\) choose \(f_A\in\mathbf C(\mc R)\)
  where \(\ran{f_A}\subseteq A\), and let \(\tau(s\concat\<A\>)=f_A(\sigma(r(s)))\),
  and let \(r(s\concat\<A\>)\) extend \(r(s)\) by letting
  \(r(s\concat\<A\>)(|s|)=\tau(s\concat\<A\>)\).

  If \(\tau\) is attacked by \(\alpha\in\mc A^\omega\), 
  then since \(\tau(\alpha\rest n+1)=f_{\alpha(n)}(\sigma(r(\alpha\rest n))\in\alpha(n)\cap\sigma(r(\alpha\rest n))\)
  and \(\sigma\) is winning, we conclude that \(\tau\) is a legal strategy and
  \(\{\tau(\alpha\rest n+1):n<\omega\}\in\mc B\).
  Therefore \(\tau\) witnesses
  \(\plII\win\schStrongSelGame{\mc A}{\mc B}\).
\end{proof}

\begin{corollary}
  If \(\mc R\) is a reflection of \(\mc A\),
  then \(\schStrongSelGame{\mc A}{\mc B}\) and \(\schStrongSelGame{\mc R}{\neg\mc B}\)
  are both perfect information dual and Markov information dual.
\end{corollary}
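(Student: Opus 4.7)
The plan is to read the corollary off directly from the four theorems just proved, since each of the two duality notions introduced in the introduction is simply a conjunction of two biconditionals, and each of the four biconditionals that arises has already been established in exactly the required form.

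Concretely, I would fix $G(X)=\schStrongSelGame{\mc A}{\mc B}$ and $H(X)=\schStrongSelGame{\mc R}{\neg\mc B}$ and then verify each clause of each definition by pointing at the corresponding theorem. For Markov information duality, the clause $\plI\prewin G(X)\iff\plII\markwin H(X)$ is the statement of the first of the four theorems, and the clause $\plII\markwin G(X)\iff\plI\prewin H(X)$ is the statement of the second. The hypothesis that $\mc R$ is a reflection of $\mc A$ is the single assumption required by both, and it is inherited from the hypothesis of the corollary.

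For perfect information duality, with the same choice of $G$ and $H$, the clause $\plI\win G(X)\iff\plII\win H(X)$ is the statement of the third theorem, and the clause $\plII\win G(X)\iff\plI\win H(X)$ is the statement of the fourth. Again the hypothesis of the corollary supplies exactly what is needed.

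I do not expect a genuine obstacle here, since the real content of the corollary has been absorbed into the four theorems and their proofs. The only mild bookkeeping step is to check that the labelling of the two games is consistent: in each of the four theorems, statements about $\plI$ on $\schStrongSelGame{\mc A}{\mc B}$ are paired with statements about $\plII$ on $\schStrongSelGame{\mc R}{\neg\mc B}$ (and vice versa), which is exactly the symmetric structure demanded by both duality definitions. Once that is observed, the four theorems assemble into the corollary with no further work.
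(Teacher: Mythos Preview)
Your proposal is correct and matches the paper's approach: the corollary is stated without proof precisely because it is an immediate assembly of the four preceding theorems, exactly as you describe.
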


\section{Applications of Reflections}

\begin{definition}\label{selectionSets}
  Let \(X\) be a topological space and \(\mc T_X\) be a chosen basis of nonempty sets for its topology.
  \begin{itemize}
    \item Let \(\mc T_{X,x} = \{U\in\mc T_X : x\in U\}\) be the local point-base at \(x\in X\).
    \item Let \(\Omega_{X,x} = \{Y\subseteq X: \forall U\in\mc T_{X,x}(U\cap Y\not=\emptyset)\}\) be the fan at \(x\in X\).
    \item Let \(\mc T_{X,F} = \{U\in\mc T_X : F\subseteq U\}\) be the local finite-base at \(F\in[X]^{<\aleph_0}\).
    \item Let \(\mc O_X = \{\mc U\subseteq\mc T_X : \bigcup \mc U=X\}\) be the collection
          of basic open covers of \(X\).
    \item Let \(\mc P_X = \{\mc T_{X,x} : x\in X\}\) be the collection of local point-bases of \(X\).
    \item Let \(\Omega_X = \{\mc U\subseteq\mc T_X : \forall F\in[X]^{<\aleph_0}\exists U\in\mc U(F\subseteq U)\}\)
          be the collection of basic \(\omega\)-covers of \(X\).
    \item Let \(\mc F_X = \{\mc T_{X,F} : F\in [X]^{<\aleph_0}\}\) be the collection of local finite-bases of \(X\).
    \item Let \(\mc D_X = \{Y\subseteq X: \forall U\in\mc T_X(U\cap Y\not=\emptyset)\}\) be the collection of dense subsets of \(X\).
    \item Let \(\Gamma_{X,x} = \{Y\subseteq X: \forall U\in\mc T_{X,x}(Y\setminus U\in[X]^{<\aleph_0})\}\) be the collection
          of converging fans at \(x\in X\). (When intersected with \([X]^{\aleph_0}\), these are the non-trivial
          sequences of \(X\) converging to \(x\).)
  \end{itemize}
\end{definition}

While these notions were defined in terms of a particular basis, the reader may verify the the following.

\begin{proposition}
  Let \(\mc A'\) be a selection basis for \(\mc A\).
  \begin{itemize}
    \item \(\plI\win\schStrongSelGame{\mc A}{\mc B}\Leftrightarrow\plI\win\schStrongSelGame{\mc A'}{\mc B}\).
    \item \(\plI\prewin\schStrongSelGame{\mc A}{\mc B}\Leftrightarrow\plI\prewin\schStrongSelGame{\mc A'}{\mc B}\).
    \item \(\plII\win\schStrongSelGame{\mc A}{\mc B}\Leftrightarrow\plII\win\schStrongSelGame{\mc A'}{\mc B}\).
    \item \(\plII\markwin\schStrongSelGame{\mc A}{\mc B}\Leftrightarrow\plII\markwin\schStrongSelGame{\mc A'}{\mc B}\).
  \end{itemize}
\end{proposition}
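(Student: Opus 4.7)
The plan is to prove each of the four equivalences by a direct strategy-transfer argument, with the two directions exploiting $\mc A'\subseteq\mc A$ and the coinitiality of $\mc A'$ in $\mc A$, respectively. Using the Axiom of Choice, fix once and for all a function $\rho:\mc A\to\mc A'$ satisfying $\rho(A)\subseteq A$ for all $A\in\mc A$. Each direction of each equivalence will reduce either to simple restriction (since any strategy in the smaller game is automatically a legal strategy in the larger) or to composition with $\rho$ to shrink outputs from $\mc A$ to $\mc A'$.

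For the two clauses about $\plI$: the implication $\plI\win\schStrongSelGame{\mc A'}{\mc B}\Rightarrow\plI\win\schStrongSelGame{\mc A}{\mc B}$ (and likewise for $\prewin$) is immediate, since a winning strategy $\sigma'$ already outputs into $\mc A'\subseteq\mc A$, and any attack against it stays in $\bigcup\mc A'$ and is therefore a legal attack in the smaller game, so the winning condition $\{\alpha(n):n<\omega\}\notin\mc B$ transfers unchanged. Conversely, given winning $\sigma$ in $\schStrongSelGame{\mc A}{\mc B}$, let $\sigma'(s)=\rho(\sigma(s))\in\mc A'$; since $\sigma'(s)\subseteq\sigma(s)$, any legal attack $\alpha$ against $\sigma'$ is simultaneously a legal attack against $\sigma$, and the losing condition for $\plII$ is inherited.

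For the two clauses about $\plII$: if $\tau$ is winning (perfect or Markov) in $\schStrongSelGame{\mc A}{\mc B}$, then its restriction to attacks drawn from $\mc A'\subseteq\mc A$ is still winning, since every legal play in the smaller game is also a legal play in the larger. Conversely, given winning $\tau'$ in $\schStrongSelGame{\mc A'}{\mc B}$, precompose entrywise with $\rho$: for perfect strategies, $\tau(\<A_0,\dots,A_n\>)=\tau'(\<\rho(A_0),\dots,\rho(A_n)\>)$, and for Markov strategies, $\tau(A,n)=\tau'(\rho(A),n)$. Since $\rho(A_n)\subseteq A_n$ the output is legal in the larger game, and the resulting play is identical to the play of $\tau'$ against the attack $(\rho(A_n))_{n<\omega}$ in the smaller game, so the $\mc B$-winning condition for $\plII$ is inherited.

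There is no real obstacle in any of these arguments; the proposition is essentially the assertion that all four game-theoretic notions depend on $\mc A$ only up to coinitiality. The one point that requires a touch of care is that in the $\plII$-to-$\plII$ direction from $\mc A'$ to $\mc A$, the translation $\rho$ must be fixed in advance and applied pointwise, so that in the Markov case the memoryless structure of $\tau$ is preserved and the round-by-round translation does not smuggle in memory that $\tau$ is not permitted to have.
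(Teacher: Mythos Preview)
Your proof is correct. The paper does not actually supply its own proof of this proposition; it is stated with the remark that ``the reader may verify'' it, so there is nothing to compare against. Your strategy-transfer argument via a fixed refinement map \(\rho:\mc A\to\mc A'\) is exactly the standard verification the author has in mind, and your care in the Markov case---fixing \(\rho\) in advance so that the translated strategy remains memoryless---addresses the only point where a careless argument could go wrong.
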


\begin{proposition}
  Each selection set in Definition \ref{selectionSets} is a selection basis for the set
  defined by replacing \(\mc T_X\) with the set of all nonempty open sets in \(X\).
\end{proposition}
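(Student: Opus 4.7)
The plan is to verify, for each of the nine families $\mc A'$ in Definition \ref{selectionSets}, that $\mc A' \preceq \mc A$, where $\mc A$ is the version obtained by replacing $\mc T_X$ with the set of all nonempty open sets of $X$. This amounts to checking (i) $\mc A' \subseteq \mc A$, which is in each case immediate since basic opens are open, and (ii) coinitiality $(\forall y \in \mc A)(\exists x \in \mc A')(x \subseteq y)$. The nine entries split naturally into three groups based on how $\mc T_X$ enters the definition.

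First I would handle the ``test'' families $\Omega_{X,x}$, $\mc D_X$, and $\Gamma_{X,x}$, whose members are subsets of $X$ characterized by meeting or cofinitely missing selected members of $\mc T_X$. For these, the basis and all-open versions in fact coincide as sets: every nonempty open contains a basic open through each of its points, so meeting (respectively, cofinitely missing) every basic open of the prescribed form is equivalent to doing so for every open of that form, and coinitiality is trivial. Next I would treat the ``local-base'' families $\mc T_{X,x}$ and $\mc T_{X,F}$. For $\mc T_{X,x}$, coinitiality is precisely the defining property of a basis; for $\mc T_{X,F}$, one needs the analogous finite-set statement, namely that for any open $U \supseteq F$ there is a basic $V$ with $F \subseteq V \subseteq U$, which I would extract from the paper's conventions on the basis $\mc T_X$.

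The bulk of the work, and the main obstacle, lies with the ``collection'' families $\mc O_X$, $\mc P_X$, $\Omega_X$, and $\mc F_X$. The representative case is $\mc O_X$: given an open cover $\mc U$ of $X$, for each $x \in X$ I would pick $U_x \in \mc U$ with $x \in U_x$ and then a basic $V_x$ with $x \in V_x \subseteq U_x$, so that $\{V_x : x \in X\}$ is a basic open cover refining $\mc U$. Parallel pointwise constructions dispatch $\Omega_X$, $\mc F_X$, and $\mc P_X$. The subtle point is that the relation $\preceq$ demands literal set-containment of the constructed basic collection inside $\mc U$, rather than mere cover-refinement; arranging that the $V_x$ genuinely belong to $\mc U$ (either by restricting $\mc U$ to its basic members or by relying on the chosen $\mc T_X$ being rich enough for this to succeed) is the step I expect to require the most care.
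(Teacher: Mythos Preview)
The paper offers no proof of this proposition (it is left to the reader), so there is no argument to compare against directly. Your decomposition into three groups is sensible, and you have correctly isolated the genuine obstacle in the ``collection'' cases and in $\mc T_{X,F}$: coinitiality in the sense of $\preceq$ requires the refining basic family to be a literal subcollection of the given open family, and for an arbitrary basis an open cover need contain no basic sets at all (take $X=\mathbb R$ with the basis of bounded open intervals and the cover $\{(-\infty,1),(0,\infty)\}$). This is not a matter of care; the step simply fails, and with it the proposition as literally stated. The paper imposes no convention on $\mc T_X$ beyond being a basis of nonempty sets, so the escape route you gesture at is not available.

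There is also a gap you did not flag. Your claim that ``$\mc A'\subseteq\mc A$ is in each case immediate since basic opens are open'' is wrong for $\mc P_X$ and $\mc F_X$: their members are not open sets but the local bases $\mc T_{X,x}$ (respectively $\mc T_{X,F}$), whereas the members of the all-open version are the sets of \emph{all} nonempty open sets containing $x$ (respectively $F$). Unless $\mc T_X$ is already the entire topology, $\mc T_{X,x}$ is a proper subset of its all-open counterpart and equals no member of the all-open family, so the inclusion fails outright. What the paper actually needs, and what \emph{is} true, is the weaker conclusion drawn immediately afterward---that the associated selection games are unaffected by the choice of basis---and that follows from a mutual-refinement relationship between the two families rather than from $\preceq$.
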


As such, the choice of topological basis is irrelevant when playing selection games using these sets.

We may now establish (or re-establish) the following dual games.

\begin{proposition}
  \(\mc P_X\) is a reflection of \(\mc O_X\).
\end{proposition}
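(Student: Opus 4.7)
The plan is to unfold the definitions and check the two conditions required for $\mc P_X$ to be a reflection of $\mc O_X$: namely, that $\mc A' := \{\ran f : f \in \mathbf C(\mc P_X)\}$ satisfies $\mc A' \subseteq \mc O_X$ and that each $\mc U \in \mc O_X$ contains some $\ran f \in \mc A'$ as a subset.

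For the first containment, I would take an arbitrary $f \in \mathbf C(\mc P_X)$ and observe that $f$ assigns, to each local point-base $\mc T_{X,x}$, a member of $\mc T_{X,x}$, i.e.\ a basic open set $f(\mc T_{X,x})$ containing $x$. Thus every $x \in X$ lies in some element of $\ran f$, so $\ran f \in \mc O_X$. This also ensures $\ran f \subseteq \mc T_X$, matching the format of $\mc O_X$.

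For the coinitiality condition, given any $\mc U \in \mc O_X$, I would use the covering property: for each $x \in X$, pick $U_x \in \mc U$ with $x \in U_x$ (so $U_x \in \mc T_{X,x}$), and define $f \in \mathbf C(\mc P_X)$ by $f(\mc T_{X,x}) = U_x$. Then $\ran f \subseteq \mc U$ and, by the argument above, $\ran f \in \mc O_X$.

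There is essentially no obstacle here: the only mildly subtle point is that $\mc P_X$ is indexed by points of $X$, so writing $f(\mc T_{X,x}) = U_x$ implicitly relies on the map $x \mapsto \mc T_{X,x}$ being well-defined on $X$ (which it trivially is). If different points produced the same local point-base, the choice function would still be well-defined as long as the chosen set lies in the common base, which it does since any $U_x$ containing $x$ also contains any $x'$ with $\mc T_{X,x'} = \mc T_{X,x}$. The entire argument should fit in a few lines.
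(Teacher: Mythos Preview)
Your proposal is correct and follows the same approach as the paper's proof: build the choice function $f\in\mathbf C(\mc P_X)$ by sending each local point-base $\mc T_{X,x}$ to a member of the given cover $\mc U$ containing $x$. The paper's proof is a one-liner that only spells out the coinitiality half, whereas you also explicitly verify $\ran f\in\mc O_X$ for arbitrary $f$ and address the well-definedness of $f$ when distinct points share a local base; these are minor elaborations of the same argument.
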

\begin{proof}
  For every open cover \(\mc U\), the corresponding choice function \(f\in\mathbf C(\mc P_X)\) is simply
  the witness that \(x\in f(\mc T_{X,x})\in\mc U\).
\end{proof}

\begin{corollary}
  \(\schStrongSelGame{\mc O_X}{\mc B}\) and \(\schStrongSelGame{\mc P_X}{\neg\mc B}\) are perfect-information
  and Markov-information dual.
\end{corollary}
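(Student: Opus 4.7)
The plan is almost entirely a matter of plugging in the two immediately preceding results. The Proposition just above establishes that \(\mc P_X\) is a reflection of \(\mc O_X\): given any open cover \(\mc U\in\mc O_X\), for each point \(x\in X\) one may select some \(f(\mc T_{X,x})\in\mc U\cap\mc T_{X,x}\), giving a choice function \(f\in\mathbf C(\mc P_X)\) whose range is contained in \(\mc U\) and is itself an open cover (hence lies in \(\mc O_X\)). This is exactly the hypothesis needed to invoke the general duality corollary from Section~2.

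So the step I would carry out is: apply the general corollary in Section~2 with \(\mc A=\mc O_X\) and \(\mc R=\mc P_X\). That corollary says that whenever \(\mc R\) is a reflection of \(\mc A\), the games \(\schStrongSelGame{\mc A}{\mc B}\) and \(\schStrongSelGame{\mc R}{\neg\mc B}\) are both perfect-information and Markov-information dual. Specializing gives all four equivalences
\[
  \plI\win\schStrongSelGame{\mc O_X}{\mc B}\iff\plII\win\schStrongSelGame{\mc P_X}{\neg\mc B},
\]
\[
  \plII\win\schStrongSelGame{\mc O_X}{\mc B}\iff\plI\win\schStrongSelGame{\mc P_X}{\neg\mc B},
\]
and similarly for \(\prewin\) versus \(\markwin\).

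There is essentially no obstacle here, since all the hard work has been done in the four theorems of Section~2; the only thing to verify is the reflection property, and that is the content of the preceding proposition. The proof should therefore be a one-line invocation, noting that \(\mc P_X\) is a reflection of \(\mc O_X\) and applying the general duality corollary.
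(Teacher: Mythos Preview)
Your proposal is correct and matches the paper's approach exactly: the corollary is stated without proof in the paper, as it follows immediately from the preceding proposition (that \(\mc P_X\) is a reflection of \(\mc O_X\)) together with the general duality corollary at the end of Section~2.
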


In the case that \(\mc B=\mc O_X\), \(\schStrongSelGame{\mc O_X}{\mc O_X}\) is the well-known Rothberger game,
and \(\schStrongSelGame{\mc P_X}{\neg\mc O_X}\) is isomorphic
to the point-open game \(PO(X)\): \(\plI\) chooses points of \(X\), \(\plII\) chooses an open neighborhood
of each chosen point, and \(\plI\) wins if \(\plII\)'s choices are a cover.
So this was simply the classic result that the Rothberger game and
point-open game are perfect-information dual \cite{MR0493925}, and the more recent result that
these games are Markov-information dual \cite{2018arXiv180606001C}.

\begin{proposition}
  \(\mc F_X\) is a reflection of \(\Omega_X\).
\end{proposition}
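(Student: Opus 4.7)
The plan is to verify the two clauses of the definition of reflection directly: namely, that $\{\ran f:f\in\mathbf C(\mc F_X)\}\subseteq\Omega_X$, and that this collection is coinitial in $\Omega_X$ with respect to $\subseteq$. This parallels the short proof already given for $\mc P_X$ being a reflection of $\mc O_X$, with finite sets replacing singletons and $\omega$-covers replacing covers.

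For the first containment, I would take an arbitrary $f\in\mathbf C(\mc F_X)$ and observe that every element of $\mc F_X$ has the form $\mc T_{X,F}$ for some $F\in[X]^{<\aleph_0}$, so $f(\mc T_{X,F})\in\mc T_{X,F}$ is a basic open set containing $F$. Then, for each finite $F\subseteq X$, the set $f(\mc T_{X,F})$ lies in $\ran f$ and contains $F$, which is exactly the definition of $\ran f\in\Omega_X$.

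For coinitiality, I would start from an arbitrary $\mc U\in\Omega_X$ and use the $\omega$-cover property to choose, for each $F\in[X]^{<\aleph_0}$, some $U_F\in\mc U$ with $F\subseteq U_F$; equivalently, $U_F\in\mc T_{X,F}$. Defining $f(\mc T_{X,F})=U_F$ gives $f\in\mathbf C(\mc F_X)$ with $\ran f\subseteq\mc U$, as required.

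The only ``obstacle'' worth flagging is bookkeeping: one should confirm that distinct elements $F\neq F'$ of $[X]^{<\aleph_0}$ produce distinct elements $\mc T_{X,F},\mc T_{X,F'}$ of $\mc F_X$ (so that the assignment $\mc T_{X,F}\mapsto U_F$ is well-defined as a function), or else remark that it is harmless to fix any indexing and choose compatibly. Beyond that, the argument is a direct unwinding of definitions with no real content beyond the $\omega$-cover property itself.
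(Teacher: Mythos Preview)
Your proposal is correct and follows the same approach as the paper. The paper's proof is a one-sentence version of your coinitiality argument (given $\mc U\in\Omega_X$, let $f(\mc T_{X,F})$ be a witness that $F\subseteq f(\mc T_{X,F})\in\mc U$), leaving the containment $\{\ran f:f\in\mathbf C(\mc F_X)\}\subseteq\Omega_X$ and the well-definedness bookkeeping you flag implicit; your write-up simply makes both explicit.
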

\begin{proof}
  For every \(\omega\)-cover \(\mc U\), the corresponding choice function \(f\in\mathbf C(\mc F_X)\) is simply
  the witness that \(F\subseteq f(\mc T_{X,F})\in\mc U\).
\end{proof}

\begin{corollary}
  \(\schStrongSelGame{\Omega_X}{\mc B}\) and \(\schStrongSelGame{\mc F_X}{\neg\mc B}\) are perfect-information
  and Markov-information dual.
\end{corollary}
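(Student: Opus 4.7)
The plan is to apply the general duality corollary (stated immediately after the four main theorems) together with the reflection proposition proved just before this corollary. That proposition establishes that \(\mc F_X\) is a reflection of \(\Omega_X\): given an \(\omega\)-cover \(\mc U\), one chooses for each finite \(F\subseteq X\) some \(U_F\in\mc U\) with \(F\subseteq U_F\), which produces a choice function \(f\in\mathbf C(\mc F_X)\) satisfying \(f(\mc T_{X,F})=U_F\) and hence \(\ran f\subseteq\mc U\).

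With this reflection in hand, I would instantiate the general duality corollary by taking \(\mc A=\Omega_X\) and \(\mc R=\mc F_X\). That corollary asserts that whenever \(\mc R\) is a reflection of \(\mc A\), the games \(\schStrongSelGame{\mc A}{\mc B}\) and \(\schStrongSelGame{\mc R}{\neg\mc B}\) are both perfect-information dual and Markov-information dual. Substituting the spaces \(\Omega_X\) and \(\mc F_X\) yields all four equivalences:
\(\plI\win\schStrongSelGame{\Omega_X}{\mc B}\Leftrightarrow\plII\win\schStrongSelGame{\mc F_X}{\neg\mc B}\),
\(\plII\win\schStrongSelGame{\Omega_X}{\mc B}\Leftrightarrow\plI\win\schStrongSelGame{\mc F_X}{\neg\mc B}\),
and the analogues for predetermined and Markov strategies.

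There is no substantive obstacle here. All of the combinatorial work lifting strategies back and forth between a selection game and its dual was carried out once and for all in the four main theorems; the only space-specific ingredient is the one-line verification that \(\mc F_X\) is a reflection of \(\Omega_X\), which is exactly the content of the preceding proposition. No additional reasoning about open covers, finite subsets of \(X\), or the topology of \(X\) is required beyond the choice-function argument already recorded above.
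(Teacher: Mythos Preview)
Your proposal is correct and follows exactly the paper's approach: the corollary is immediate from the preceding proposition that \(\mc F_X\) is a reflection of \(\Omega_X\) together with the general duality corollary, and no further argument is given in the paper.
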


Note that in the case that \(\mc B=\Omega_X\), \(\schStrongSelGame{\Omega_X}{\Omega_X}\) is the Rothberger
game played with \(\omega\)-covers, and \(\schStrongSelGame{\mc F_X}{\neg\Omega_X}\) is isomorphic
to the \(\Omega\)-finite-open game \(\Omega FO(X)\): \(\plI\) chooses finite subsets of \(X\), \(\plII\) chooses an open neighborhood
of each chosen finite set, and \(\plI\) wins if \(\plII\)'s choices are an \(\omega\)-cover.
These games were shown to be dual in \cite{2018arXiv180606001C}.

\begin{proposition}
  \(\mc T_X\) is a reflection of \(\mc D_X\).
\end{proposition}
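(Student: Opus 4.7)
The plan is to unwind the definition of reflection directly: I need to show both that (a) every range of a choice function in $\mathbf{C}(\mc T_X)$ is a dense subset of $X$, and (b) every dense subset $D\in\mc D_X$ contains the range of some choice function in $\mathbf{C}(\mc T_X)$. Together these give $\{\ran f:f\in\mathbf{C}(\mc T_X)\}\preceq\mc D_X$, which is exactly the requirement that $\mc T_X$ be a reflection of $\mc D_X$.

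For (a), I would let $f\in\mathbf{C}(\mc T_X)$ be arbitrary. For each $U\in\mc T_X$ the element $f(U)$ lies in $U\cap\ran f$ by definition of a choice function, so $\ran f$ meets every member of $\mc T_X$. That is exactly the condition defining $\mc D_X$, so $\ran f\in\mc D_X$.

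For (b), I would let $D\in\mc D_X$. The definition of $\mc D_X$ immediately tells me $D\cap U\neq\emptyset$ for every $U\in\mc T_X$, so the axiom of choice produces a function $f$ with $f(U)\in D\cap U$ for each $U\in\mc T_X$. This $f$ is a choice function on $\mc T_X$ with $\ran f\subseteq D$, as desired.

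I do not anticipate any real obstacle here; the argument is essentially definitional. The only point worth flagging is that $\mc D_X$ is defined using the chosen basis $\mc T_X$ rather than the full topology, which is precisely what makes the two halves of the argument align: the same family $\mc T_X$ indexes both the choice functions and the density condition, so no separate step is needed to convert between ``meets every basic open set'' and ``meets every nonempty open set.''
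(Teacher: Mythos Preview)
Your proof is correct and follows essentially the same definitional unwinding as the paper. The paper's proof is a single sentence that constructs, for each dense $D$, the choice function $f(U)\in U\cap D$; you make the same construction for (b) and additionally verify (a) explicitly, which the paper leaves implicit (or subsumes into the ``put another way'' characterization, since the constructed $f$ already has $\ran f\in\mc D_X$).
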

\begin{proof}
  For every dense \(D\), the corresponding choice function \(f\in\mathbf C(\mc T_X)\) is simply
  the witness that \(f(U)\in U\cap D\).
\end{proof}

\begin{corollary}
  \(\schStrongSelGame{\mc D_X}{\mc B}\) and \(\schStrongSelGame{\mc T_X}{\neg\mc B}\) are perfect-information
  and Markov-information dual.
\end{corollary}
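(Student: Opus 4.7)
The plan is to combine two results already established in the paper. The immediately preceding proposition shows that \(\mc T_X\) is a reflection of \(\mc D_X\): given any dense \(D \in \mc D_X\) and any basic open \(U \in \mc T_X\), one picks \(f(U) \in U \cap D\), yielding \(f \in \mathbf C(\mc T_X)\) with \(\ran f \subseteq D\); since \(\mc T_X\) is a basis for the topology, \(\ran f\) itself meets every basic open set and so lies in \(\mc D_X\). Thus the reflection hypothesis holds with \(\mc A = \mc D_X\) and \(\mc R = \mc T_X\).

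With this in hand, I would simply invoke the general duality corollary from Section~2, which asserts that whenever \(\mc R\) is a reflection of \(\mc A\), the games \(\schStrongSelGame{\mc A}{\mc B}\) and \(\schStrongSelGame{\mc R}{\neg \mc B}\) are both perfect-information dual and Markov-information dual, for an arbitrary payoff collection \(\mc B\). Instantiating this corollary at \(\mc A = \mc D_X\) and \(\mc R = \mc T_X\) yields exactly the asserted dualities.

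There is essentially no obstacle: the general corollary bundles all four main theorems of Section~2, each of which treats \(\mc B\) as a free parameter, so a single invocation handles both duality claims simultaneously. The only sanity check worth making is that nothing in the hypotheses of those four theorems secretly restricts \(\mc B\) or requires a compatibility between \(\bigcup\mc A\) and \(\bigcup\mc R\) beyond what the reflection condition already supplies — and indeed those proofs only ever use that \(\ran f \in \mc A\) for suitable \(f \in \mathbf C(\mc R)\), which is precisely the reflection property. For context one could additionally note, in parallel with the Rothberger/point-open and \(\omega\)-Rothberger/\(\Omega FO\) examples preceding this corollary, that \(\schStrongSelGame{\mc T_X}{\neg \mc D_X}\) is (isomorphic to) the open-point game where \(\plI\) plays nonempty basic open sets and \(\plII\) plays points from them, with \(\plI\) winning if the resulting set of points is dense.
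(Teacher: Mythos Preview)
Your proposal is correct and matches the paper's approach: the corollary is stated without proof, being an immediate consequence of the preceding proposition (that \(\mc T_X\) is a reflection of \(\mc D_X\)) together with the general duality corollary at the end of Section~2. Your additional remarks simply unpack that implicit reasoning.
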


In the case that \(\mc B=\Omega_{X,x}\) for some \(x\in X\), \(\schStrongSelGame{\mc D_X}{\Omega_{X,x}}\) is
the strong countable dense fan-tightness game at \(x\), see e.g. \cite{MR2678950}. \(\schStrongSelGame{\mc T_X}{\neg\Omega_{X,x}}\)
is the game \(CL(X,x)\) first studied by Tkachuk in \cite{tkachukTwoPointGame}. Tkachuk showed in that paper
that these games are perfect-information dual; Clontz and Holshouser previously showed these were Markov-information
dual in the case that \(X=C_p(Y)\)  \cite{2018arXiv180606001C}.

In the case that \(\mc B=D_X\), then \(\schStrongSelGame{\mc D_X}{\mc D_X}\) is the strong selective
separability game introduced in \cite{MR1711901}, and \(\schStrongSelGame{\mc T_X}{\neg\mc D_X}\) is the
point-picking game of Berner and Juh\'asz defined in \cite{MR775687}. Scheepers showed that these
were perfect-information dual in his paper.

\begin{proposition}
  \(\mc T_{X,x}\) is a reflection of \(\Omega_{X,x}\).
\end{proposition}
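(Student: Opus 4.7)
The plan is to verify the two conditions in the definition of reflection directly, exactly in the style of the three preceding propositions. Concretely, I need to show (i) that $\{\ran f : f \in \mathbf C(\mc T_{X,x})\} \subseteq \Omega_{X,x}$, and (ii) that for every $Y \in \Omega_{X,x}$ there is a choice function $f \in \mathbf C(\mc T_{X,x})$ with $\ran f \subseteq Y$.

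For (i), fix any $f \in \mathbf C(\mc T_{X,x})$ and any $V \in \mc T_{X,x}$. Then $f(V) \in V$ by definition of a choice function, and $f(V) \in \ran f$, so $V \cap \ran f \neq \emptyset$; this is precisely the condition that $\ran f \in \Omega_{X,x}$. This step is immediate and essentially definitional.

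For (ii), the point is that the defining property of $\Omega_{X,x}$ is exactly the nonemptiness needed to run a pointwise selection: given $Y \in \Omega_{X,x}$, for each $U \in \mc T_{X,x}$ one has $U \cap Y \neq \emptyset$, so one may pick $f(U) \in U \cap Y$. Then $f \in \mathbf C(\mc T_{X,x})$ and $\ran f \subseteq Y$. This is the direct analogue of the witness $f(U) \in U \cap D$ used in the proof that $\mc T_X$ is a reflection of $\mc D_X$, with $D$ replaced by $Y$ and the quantification restricted to neighborhoods of $x$.

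There is no genuine obstacle here; the only mild point to be careful about is that choice functions on $\mc T_{X,x}$ are only required to pick inside neighborhoods of $x$, not inside arbitrary open sets, so the fan condition "$Y$ meets every $U \in \mc T_{X,x}$" matches the domain of the choice functions exactly. I would present the argument as a single-sentence proof mirroring the three earlier propositions, e.g.\ "For every $Y \in \Omega_{X,x}$, the corresponding choice function $f \in \mathbf C(\mc T_{X,x})$ is simply the witness that $f(U) \in U \cap Y$."
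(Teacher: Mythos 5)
Your proof is correct and follows essentially the same route as the paper's one-line argument: given \(Y\in\Omega_{X,x}\), pick \(f(U)\in U\cap Y\) for each \(U\in\mc T_{X,x}\). You additionally spell out the containment \(\{\ran f:f\in\mathbf C(\mc T_{X,x})\}\subseteq\Omega_{X,x}\), which the paper leaves implicit but which is required by the definition of a selection basis, so nothing is missing.
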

\begin{proof}
  For every set \(Y\) with limit point \(x\), the corresponding choice function \(f\in\mathbf C(\mc T_{X,x})\) is simply
  the witness that \(f(U)\in U\cap Y\).
\end{proof}

\begin{corollary}
  \(\schStrongSelGame{\Omega_{X,x}}{\mc B}\) and \(\schStrongSelGame{\mc T_{X,x}}{\neg\mc B}\) are perfect-information
  and Markov-information dual.
\end{corollary}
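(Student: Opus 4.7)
The plan is to reduce the statement immediately to the general duality corollary proved earlier. That corollary asserts: whenever $\mc R$ is a reflection of $\mc A$, the selection games $\schStrongSelGame{\mc A}{\mc B}$ and $\schStrongSelGame{\mc R}{\neg\mc B}$ are both perfect-information dual and Markov-information dual, for every choice of payoff family $\mc B$. The preceding proposition established that $\mc T_{X,x}$ is a reflection of $\Omega_{X,x}$. Instantiating the general corollary with $\mc A = \Omega_{X,x}$ and $\mc R = \mc T_{X,x}$ therefore yields the claim directly, with no further choices to make.

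This is the fifth corollary in this section to follow the same template (after the $\mc O_X / \mc P_X$, $\Omega_X / \mc F_X$, $\mc D_X / \mc T_X$, and $\mc O_X/\mc P_X$-style cases above), so no additional technical content needs to be supplied beyond naming the instantiation. The only real ``hard part'' of such corollaries lives in the preceding reflection proposition, and in this case even that is essentially immediate: any $f \in \mathbf{C}(\mc T_{X,x})$ has $f(U) \in U \cap \ran f$ for every $U \in \mc T_{X,x}$, witnessing $\ran f \in \Omega_{X,x}$; and conversely, given any $Y \in \Omega_{X,x}$ one defines $f \in \mathbf{C}(\mc T_{X,x})$ with $\ran f \subseteq Y$ by picking $f(U)$ from the nonempty intersection $U \cap Y$. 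Hence no obstacle is anticipated.

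For concreteness, specializing the free parameter $\mc B$ recovers several familiar games at $x$: taking $\mc B = \Omega_{X,x}$ produces a duality between a strong fan-tightness-style selection game at $x$ and a neighborhood-picking game at $x$, paralleling the $\mc D_X/\mc T_X$ situation for dense subsets. Because the general corollary is uniform in $\mc B$, all such specializations are subsumed at once and require no separate argument.
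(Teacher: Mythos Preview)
Your proposal is correct and matches the paper's approach exactly: the corollary is stated without proof in the paper because it follows immediately from the preceding proposition (that $\mc T_{X,x}$ is a reflection of $\Omega_{X,x}$) together with the general duality corollary, precisely as you describe. The additional commentary you provide about the reflection verification and specializations of $\mc B$ is accurate but goes beyond what the paper records.
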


In the case that \(\mc B=\Gamma_{X,x}\) for some \(x\in X\), \(\schStrongSelGame{\mc T_{X,x}}{\neg\Gamma_{X,x}}\)
is Gruenhage's \(W\) game \cite{MR0413049}. Its dual \(\schStrongSelGame{\Omega_{X,x}}{\Gamma_{X,x}}\) characterizes
the strong Fr\'echet-Urysohn property \(\plI\notprewin\schStrongSelGame{\Omega_{X,x}}{\Gamma_{X,x}}\) at \(x\),
which now seen to be equivalent to \(\plII\notmarkwin\schStrongSelGame{\mc T_{X,x}}{\neg\Gamma_{X,x}}\).
This allows us to obtain the following result.

\begin{corollary}
  \(\plI\notprewin\schStrongSelGame{\Omega_{X,x}}{\Gamma_{X,x}}\)
  if and only if
  \(\plI\notwin\schStrongSelGame{\Omega_{X,x}}{\Gamma_{X,x}}\).
\end{corollary}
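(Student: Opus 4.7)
The plan is to combine the two duality theorems above with a classical collapse result for Gruenhage's \(W\) game; the corollary can be read as saying that the strong Fr\'echet--Urysohn selection principle coincides with its perfect-information game variant.

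Setting \(G = \schStrongSelGame{\Omega_{X,x}}{\Gamma_{X,x}}\), one direction is immediate: since a winning predetermined strategy is \emph{a fortiori} a winning perfect-information strategy, \(\plI\notwin G\) trivially implies \(\plI\notprewin G\).

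For the converse, I would dualize. Write \(H = \schStrongSelGame{\mc T_{X,x}}{\neg\Gamma_{X,x}}\). The preceding proposition shows \(\mc T_{X,x}\) is a reflection of \(\Omega_{X,x}\), so the first and third theorems above yield
\[\plI\notprewin G \Leftrightarrow \plII\notmarkwin H \qquad \text{and} \qquad \plI\notwin G \Leftrightarrow \plII\notwin H.\]
The desired implication \(\plI\notprewin G \Rightarrow \plI\notwin G\) thus reduces, contrapositively, to \(\plII\win H \Rightarrow \plII\markwin H\).

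The substantive content --- and the principal obstacle --- is this strategy-to-Markov-strategy collapse for Player II in Gruenhage's \(W\) game. I would appeal to Gruenhage's theorem in \cite{MR0413049}, which supplies exactly this reduction. Composing the dualities with Gruenhage's collapse closes the chain
\[\plI\notprewin G \Leftrightarrow \plII\notmarkwin H \Leftrightarrow \plII\notwin H \Leftrightarrow \plI\notwin G,\]
yielding the corollary.
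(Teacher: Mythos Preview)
Your argument is correct and follows the same plan as the paper: dualize to Gruenhage's \(W\) game \(H=\schStrongSelGame{\mc T_{X,x}}{\neg\Gamma_{X,x}}\) and invoke a known equivalence there. The only difference is bookkeeping---the paper cites \cite{MR510910} directly for \(\plII\notwin H \Leftrightarrow \plI\notprewin G\) and then applies just the perfect-information duality, whereas you apply both dualities and place the external input at the Markov collapse \(\plII\win H \Leftrightarrow \plII\markwin H\), attributing it to \cite{MR0413049}.
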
 
\begin{proof}
  As shown in \cite{MR510910},
  a space is \(w\) at \(x\), that is, \(\plII\notwin\schStrongSelGame{\mc T_{X,x}}{\neg\Gamma_{X,x}}\) 
  if and only if \(\plI\notprewin\schStrongSelGame{\Omega_{X,x}}{\Gamma_{X,x}}\) for all \(x\in X\).
\end{proof}

For \(\mc B=\Omega_{X,x}\), \(\schStrongSelGame{\mc T_{X,x}}{\neg\Omega_{X,x}}\) is the variant of
Gruenhage's \(W\) game for clustering. This game is now seen to be dual to the strong countable fan tightness game
\(\schStrongSelGame{\Omega_{X,x}}{\Omega_{X,x}}\) at \(x\).

%
%

\section{Open Questions}

\begin{question}
  Does there exist a natural reflection for \(\Gamma_{X,x}\)
  or \(\Gamma_X=\{\mc U\subseteq\mc T_X:\forall x\in X(\mc U\setminus\mc T_{X,x}\in[T_X]^{<\aleph_0})\}\)?
\end{question} 

\begin{question}
  Can these results be extended for \(\schSelGame{\mc A}{\mc B}\)?
\end{question}

\section{Acknowledgements}

Thanks to Prof. Jared Holshouser for his input during the writing of these results.

\bibliographystyle{plain}
\bibliography{bibliography}

\end{document}